\documentclass[11pt]{amsart}

\usepackage[T1]{fontenc}
\usepackage[utf8]{inputenc}
\usepackage{lmodern}
\usepackage{amsmath,amssymb,amsthm,mathtools}
\usepackage{enumitem}
\usepackage{hyperref}
\usepackage[margin=1.15in]{geometry}
\usepackage{color}
\hypersetup{
    colorlinks=true,
    linkcolor=blue,
    citecolor=blue,
    urlcolor=blue,
    filecolor=blue,
    anchorcolor=blue
}
\newtheorem{theorem}{Theorem}
\newtheorem{lemma}{Lemma}

\newtheorem{corollary}{Corollary}
\newtheorem{maintheorem}{Theorem}

\newtheoremstyle{remarkbold}
  {6pt plus 2pt minus 2pt} 
  {6pt plus 2pt minus 2pt} 
  {\normalfont}
  {}
  {\bfseries}
  {.}
  {.5em} 
  {}

\theoremstyle{remarkbold}
\newtheorem{definition}{Definition}

\newtheorem{remark}{Remark}

\DeclareMathOperator{\dist}{dist}
\DeclareMathOperator{\Ree}{Re}
\DeclareMathOperator{\Gr}{Gr}
\DeclareMathOperator{\dens}{dens}

\newcommand{\F}{\mathbb F}
\newcommand{\Torus}{\mathbb T_{\mathbb F}}
\newcommand{\LL}{\mathcal L}
\newcommand{\eps}{\varepsilon}

\newcommand{\N}{\mathbb{N}}

\title[Bishop-Phelps-Bollob\'{a}s Property for operators into $C(L)$]{The Bishop--Phelps--Bollob\'as Property for Extremally Disconnected Ranges: Separable and Low-Density Domains}

\author[Amrutam]{Tattwamasi Amrutam}\address[T. Amrutam]{Institute of Mathematics, Polish Academy of Sciences,
ul. \'Sniadeckich 8, 00-656 Warszawa, Poland}\email{tattwamasiamrutam@gmail.com} \author[Dey]{Priyadarshi Dey}\address[P. Dey]{Millsaps College, Jackson, Mississippi, USA}\email{priyadarshid4@gmail.com, deyp@millsaps.edu}
\author[Liu]{Chunlin Liu\textsuperscript{*}}
\address[C. Liu]{School of Mathematical Sciences, Dalian University of Technology,
Dalian 116024, P.R. China, and Institute of Mathematics, Polish Academy of Sciences,
ul. \'Sniadeckich 8, 00-656 Warszawa, Poland}

\email{chunlinliu@mail.ustc.edu.cn}
\author[Monika]{Monika}\address[Monika]{Hampton University, Hampton, VA, USA}\email{fnu.monika@hamptonu.edu}
\thanks{C. Liu  was supported by   the
		Postdoctoral Fellowship Program and China Postdoctoral
		Science Foundation under Grant Number BX20250067, and the China Postdoctoral Science
		Foundation under Grant Number 2025M773074.}
\thanks{\textsuperscript{*}Corresponding author.}
\begin{document}
\begin{abstract}
We prove a Bishop--Phelps--Bollob\'as theorem for operators into spaces of
continuous scalar-valued functions on extremally disconnected compact
Hausdorff spaces over both the real and complex scalar fields. The main
result applies whenever the density character of the domain is strictly
smaller than the Baire number of the underlying compact space. The proof
also yields an explicit quadratic Bishop--Phelps--Bollob\'as modulus. In
particular, every separable Banach space paired with such a function space
has the Bishop--Phelps--Bollob\'as property for operators.
\end{abstract}

\maketitle

\section{Introduction}

The Bishop-Phelps theorem \cite{BP1961} establishes that for every Banach
space $X$, the set of norm-attaining functionals is dense in $X^*$.
Bollob\'as \cite{Bollobas1970} subsequently strengthened this result by
proving that both the functional and the point at which it attains its norm
can be simultaneously approximated. Specifically, if $x \in S_X$ and
$x^* \in S_{X^*}$ satisfy $|1 - x^*(x)| < \eps^2/2$, there exist $y \in S_X$
and $y^* \in S_{X^*}$ such that $y^*(y) = 1$, $\|y - x\| < \eps$, and
$\|y^* - x^*\| < \eps$.

Acosta, Aron, Garc\'ia, and Maestre \cite{Acosta2008} introduced a
quantitative generalization of this phenomenon for bounded linear operators
between Banach spaces. A pair of Banach spaces $(X, Y)$ is said to have the
Bishop--Phelps--Bollob\'as property (BPBp) if for every $\eps \in (0, 1)$,
there exists an $\eta(\eps) > 0$ such that whenever $T \in S_{\LL(X, Y)}$
and $x_0 \in S_X$ satisfy $\|T x_0\| > 1 - \eta(\eps)$, there exist an
operator $S \in S_{\LL(X, Y)}$ and a vector $u_0 \in S_X$ such that
$\|S u_0\| = 1$, $\|u_0 - x_0\| < \eps$, and $\|S - T\| < \eps$.

The BPBp has since been studied extensively. For example, $(X, Y)$ has the
BPBp whenever $X$ is uniformly convex \cite{KimLee2014}, and the case where
$X = L_1(\mu)$ was fully classified by Choi, Kim, Lee, and Mart\'in
\cite{CKLM2014}. We refer the reader to \cite{DGMR2022BPB} for a
comprehensive survey of recent developments.

This paper focuses on the delicate case where the target space is \(C(K)\),
the space of continuous functions on a compact Hausdorff space. Historically,
establishing the BPBp in this setting has required strong geometric
assumptions on the domain, restrictions on the operator, or topological
conditions on the target. For instance, Johnson and Wolfe
\cite{JohnsonWolfe1979} proved the density of norm-attaining operators from
\(C(K)\) into \(C(S)\) in the real case, and Acosta et al.\
\cite{AcostaBecerraEtAl2014} later established the BPBp for this pair.

Aron, Cascales, and Kozhushkina
\cite{AronCascalesKozhushkina2011} proved a Bishop--Phelps--Bollob\'as
theorem for Asplund operators from real Banach spaces into \(C_0(L)\). In
particular, their result yields the BPBp for \((X,C_0(L))\) whenever \(X\)
is an Asplund space. Cascales, Guirao, and Kadets
\cite{CascalesGuiraoKadets2013} extended related techniques to operators
into uniform algebras, which consequently establishes the BPBp for compact
operators into \(C(K)\) for any Banach space \(X\). Cheng, Cheng, Xu, Zhang,
and Zheng \cite{ChengEtAl2020} later obtained a Bishop--Phelps--Bollob\'as
theorem for Asplund operators \(T:X\to C(K)\), without any hypothesis on the
Banach space \(X\). This covers, in particular, all weakly compact, compact,
and finite-rank operators, regardless of the domain.

Furthermore, when \(K\) is a scattered compact space, \(C(K)\) enjoys
property \(\beta\), which automatically guarantees the BPBp for
\((X,C(K))\) for any domain space \(X\). For compact operators, Acosta et
al.\ \cite{AcostaBecerraEtAl2014} showed that
\(\mathcal K(C_0(L),Y)\) has the BPBp whenever \(Y\) is uniformly convex,
while Dantas et al.\ \cite{DGMM2016} developed techniques to carry the BPBp
for compact operators from sequence spaces to function spaces.

However, the landscape changes drastically when $K$ is not scattered. A
prominent and highly structured class of non-scattered spaces are the
extremally disconnected (Stonean) spaces, where the closure of every open
set is open. The BPBp for \((X,C(L))\), where \(L\) is an arbitrary extremally disconnected
compact space, appears to have remained open.

Our main result settles this question in the affirmative for all separable
domains.

\begin{maintheorem}
\label{thm:intro}
    Let $X$ be a separable Banach space and let $L$ be an extremally
    disconnected compact Hausdorff space. Then the pair $(X, C(L))$ has the
    Bishop--Phelps--Bollob\'as property.
\end{maintheorem}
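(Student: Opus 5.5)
Identify an operator $T\colon X\to C(L)$ with the weak$^*$-continuous map $\tau\colon L\to X^*$, $\tau(t)=T^*\delta_t$, so that $\|T\|=\sup_{t\in L}\|\tau(t)\|$ and $(Tx)(t)=\tau(t)(x)$. Conversely, any bounded weak$^*$-continuous $\sigma\colon L\to X^*$ defines an operator. The plan is therefore to modify $\tau$ near points where $|\tau(t)(x_0)|$ is almost $1$, and to keep the modification weak$^*$-continuous. Extremal disconnectedness is what makes such modifications possible. Separability of $X$ enters because $B_{X^*}$ with the weak$^*$ topology is then metrizable. The paper's abstract phrases the relevant consequence via the Baire number of $L$: a map from $L$ into a metrizable compact space is controlled on a dense open set. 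More concretely, every continuous map from an extremally disconnected $L$ has clopen ``pieces'', and the point evaluations of a separable space are determined by countably many functions.

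Steps in order. Let $\|T\|=1$, $x_0\in S_X$, and $\|Tx_0\|>1-\eta$. Choose $t_0$ and a unimodular scalar $\lambda$ with $\lambda\tau(t_0)(x_0)>1-\eta$. Let $U=\{t:\Ree\lambda\tau(t)(x_0)>1-\eta\}$, which is open, and let $V=\overline{U}$, which is clopen since $L$ is extremally disconnected. On $V$ the functionals $\tau(t)$ almost attain their norm at $x_0$. Apply the Bishop--Phelps--Bollob\'as theorem for functionals uniformly in $t$. For the functional $f=\lambda\tau(t_0)$, Bollob\'as gives $y_0\in S_X$ and $g\in S_{X^*}$ with $g(y_0)=1$, $\|y_0-x_0\|<\eps$ and $\|g-f\|<\eps$, with $\eta\sim\eps^2$. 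This is where the quadratic modulus comes from.

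Now define a new map by gluing:
\[
\sigma(t)=\tau(t)+\phi(t)\bigl(\bar\lambda g-\tau(t)\bigr)\cdot(\text{correction}),
\]
where $\phi\colon L\to[0,1]$ is continuous, equal to $1$ on a clopen neighbourhood $W\ni t_0$ and supported in $V$. Since $L$ is extremally disconnected, one can simply take $\phi=\chi_W$ for a clopen $W$ on which $\|\tau(t)-\tau(t_0)\|$ is small. This is the key step. Weak$^*$-continuity of $\tau$ does not give norm-smallness of $\tau(t)-\tau(t_0)$ on a neighbourhood, and choosing $W$ via weak$^*$-closeness only makes $|\tau(t)(x_0)-\tau(t_0)(x_0)|$ small. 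The fix is to work with convex combinations. Set $\sigma(t)=\tau(t)$ off $W$. On $W$, set
\[
\sigma(t)=\tfrac12\bigl(\tau(t)+\bar\lambda g\bigr)
\]
after first arranging that $\sigma$ attains its norm at $t_0$ on $y_0$. Because the values on $W$ are a constant plus $\tau$, $\sigma$ is weak$^*$-continuous on the clopen set $W$, hence everywhere.

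Norm control. We need $\|\sigma(t)\|\le 1$ on $W$ with equality at $t_0$ at $y_0$. We also need $\|\sigma-\tau\|_\infty=\sup_W\tfrac12\|\bar\lambda g-\tau(t)\|$ to be small. This requires $\|\tau(t)-\bar\lambda g\|$ to be small in norm on $W$, which is the main obstacle. I would handle it with a Bollob\'as-type argument applied to $\tau(t)$ for each $t\in W$ separately. Each such $\tau(t)$ almost attains its norm at $x_0$, so we can find $g_t$ norm-close to $\tau(t)$ and attaining the norm at a common point $y_0$. The common $y_0$ would come from Phelps' lemma applied to $x_0$ using the single functional constructed from the supremum over $W$. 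Then we must choose $t\mapsto g_t$ weak$^*$-continuously. Here separability is used again: the Bollob\'as construction, via the Br{\o}nsted--Rockafellar or Phelps cone iteration, is given by countably many norm-continuous steps. Extremal disconnectedness, through the Baire number exceeding $\aleph_0$, lets us pass to a clopen $W$ on which these countably many choices are continuous. If that fails, the fallback is the Aron--Cascales--Kozhushkina scheme, using a partition of $V$ into countably many clopen pieces where $\tau$ varies little in norm on a dense set.
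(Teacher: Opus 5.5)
\textbf{There is a genuine gap.} It sits at the step you yourself call the main obstacle, and none of your fixes closes it.

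Replacing $\tau$ on a clopen neighbourhood $W$ of $t_0$ by anything built from one fixed functional bounds $\|S-T\|$ below in terms of $\sup_{t\in W}\|\tau(t)-\bar\lambda g\|$. For your choice $\tfrac12(\tau(t)+\bar\lambda g)$ it is exactly $\tfrac12$ of that. This quantity is not small for the non-Asplund operators the theorem is really about.

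Take $J\colon C[0,1]\to L^\infty[0,1]\cong C(L)$. Each $\tau(t)=J^*\delta_t$ is a character of $C[0,1]$, so $\tau(t)=\delta_{\pi(t)}$ with $\pi\colon L\to[0,1]$ continuous. Moreover $\pi(W)$ is never a single point $x$ for a nonempty open $W$. Otherwise $Jf$, with $f(y)=|y-x|$, would vanish on $W$, and $W$ corresponds to a set of positive measure. Hence $\tau$ has norm-oscillation $2$ on every nonempty open set. For $T$ near $J$ your gluing therefore gives $\|\sigma-\tau\|_\infty\ge\tfrac12(1-\|T-J\|)$, whatever $W$ is.

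The same example rules out the Aron--Cascales--Kozhushkina fallback. That scheme needs clopen pieces on which $\tau$ has small norm-oscillation, which is essentially the Asplund hypothesis. Separately, $\tfrac12(\tau(t_0)+\bar\lambda g)$ attains norm $1$ at $y_0$ only if $\tau(t_0)(y_0)=\bar\lambda$ already holds, so ``first arranging'' this is circular.

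Your remaining idea is to take a Bollob\'as approximant $g_t$ for each $\tau(t)$ with a common norming point, chosen weak$^*$-continuously in $t$. This is not an argument, for three reasons:
\begin{enumerate}[label=\normalfont(\roman*)]
\item Nothing in the Bollob\'as or Br{\o}nsted--Rockafellar construction depends weak$^*$-continuously on the functional.
\item A Baire argument yields continuity points of countably many functions, not a continuous selection on a whole clopen set.
\item The Phelps-lemma sentence does not produce a common $y_0$. A common $y_0$ is not even needed: one exact norming point suffices.
\end{enumerate}

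\textbf{What the paper does instead.} It never selects functionals. Two ideas are missing from your plan.

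First, a scalar reduction. Use the $1$-injectivity of $C(L)$ to extend $F(x+\alpha u_0,x)=Tx+\alpha g$ from the subspace $\{(x+\alpha u_0,x)\}$ of $X\times X$, normed by $\|y\|+\delta\|x\|$. This shows that any $g\in C(L)$ with $g(s)\in A(s):=\{a(u_0): a\in B_{X^*},\ \|a-T_s\|\le\delta\}$ for all $s$ lifts to $S$ with $\|S\|\le1$, $\|S-T\|\le\delta$ and $Su_0=g$. So one only needs a continuous scalar selection of these compact convex sets with $|g(p)|=1$ at a single point.

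Second, the base point is not an arbitrary near-norming point. Separability makes the family of lower semicontinuous functions $d_{v,\zeta,q}(s)=\dist(T_s,\{a\in B_{X^*}:|a(v)-\zeta|\le q\})$ countable. So they have a common continuity point $p$ in the open set $\{|Tx_0|>1-\eta\}$. Bollob\'as applied at $\theta T_p$ gives $u_0$ and $\omega$ with $\omega(u_0)=\lambda_0\in\Torus$ and $\|\omega-T_p\|$ small. Continuity at $p$ then shows that for $s$ near $p$ the set $A(s)$ meets small discs around $\lambda_0$, even though $T_s$ may be far from $T_p$ in norm.

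Gleason's theorem, applied on the clopen rings $C_n\setminus C_{n+1}$ of a nested clopen neighbourhood base at $p$, then produces the selection. One sets $g=\lambda_0$ on $\bigcap_n C_n$ and $g=Tu_0$ off $C_1$. Separability enters through this countable Baire argument, not through any norm-continuity of $\tau$.
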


Theorem~\ref{thm:intro} is new for non-Asplund operators into extremally
disconnected ranges. The result of
\cite{AronCascalesKozhushkina2011} handles the case of Asplund domains, and
\cite{ChengEtAl2020} handles the case of Asplund operators, regardless of separability of $X$. Our theorem
complements both results by covering the remaining case: separable $X$ with
arbitrary, possibly non-Asplund, bounded linear operators into $C(L)$ for
extremally disconnected $L$. A concrete example is the pair
\((C[0,1],L^\infty[0,1])\). Indeed, \(L^\infty[0,1]\) is isometric to
\(C(L)\) for a hyperstonean compact space \(L\), and \(C[0,1]\) is separable.
Thus Theorem~\ref{thm:intro} applies to every bounded operator
\(T:C[0,1]\to L^\infty[0,1]\). This includes operators which are not covered
by the Asplund-operator theorem of \cite{ChengEtAl2020}; for example, the
canonical isometric embedding
\[
J:C[0,1]\longrightarrow L^\infty[0,1],\qquad Jf=[f],
\]
is not an Asplund operator.

A closer inspection of the proof of Theorem~\ref{thm:intro} shows that, in the separable
case, the relevant Baire-category argument involves only countably many dense
\(G_\delta\) sets. In the general case the same argument works for a family
indexed by a set of cardinality at most \(\dens(X)\).

Recall that the
\emph{Baire number} $\kappa(L)$ of a topological space $L$ is the least
cardinal $\kappa$ such that some family of $\kappa$ dense open subsets of
$L$ has a non-dense intersection. For any non-empty compact Hausdorff space,
$\kappa(L) \geq \aleph_1$. Replacing the countable Baire argument by an
intersection of $\dens(X)$-many dense $G_\delta$ sets, where $\dens(X)$
denotes the density character of $X$, yields the following generalization.

\begin{maintheorem}
\label{thm:introB}
Let $X$ be a Banach space and let $L$ be an extremally disconnected compact
Hausdorff space such that $\dens(X) < \kappa(L)$. Then the pair
$(X, C(L))$ has the Bishop--Phelps--Bollob\'as property.
\end{maintheorem}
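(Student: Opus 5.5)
Since $L$ is extremally disconnected, $C(L)$ is a $1$-injective Banach space, and I will use this together with a Baire-category argument. An operator $T\colon X\to C(L)$ corresponds to a weak$^*$-continuous map $t\mapsto T^*\delta_t$ from $L$ into $B_{X^*}$.

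Fix $T\in S_{\LL(X,C(L))}$ and $x_0\in S_X$ with $\|Tx_0\|>1-\eta$. Multiplying $T$ by a unimodular scalar, I may pick a point $t_0\in L$ with $(Tx_0)(t_0)>1-\eta$. By continuity, the set $U=\{t:\Ree (Tx_0)(t)>1-\eta\}$ is an open neighbourhood of $t_0$. Its closure $\overline U$ is clopen, because $L$ is extremally disconnected.

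The strategy is to find a single functional $x^*\in S_{X^*}$ and a point $u_0$ with the following properties. First, $x^*(u_0)=1$ and $\|u_0-x_0\|<\eps$. Second, $x^*$ is close to $T^*\delta_t$ for all $t$ in a nonempty clopen set $V\subseteq\overline U$.

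To produce $x^*$ and $u_0$, apply the Bishop--Phelps--Bollob\'as theorem to a suitable average or a suitable point functional $T^*\delta_{s}$, $s\in V$. This gives $x^*$ with $\|x^*-T^*\delta_s\|<\eps$ and $u_0$ with $\|u_0-x_0\|<\eps$, using that $\Ree T^*\delta_s(x_0)>1-\eta$ with $\eta\approx\eps^2/2$.

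Then define
\[
S x = \mathbf 1_{V}\,x^*(x) + \mathbf 1_{L\setminus V}\,T x
\]
and correct it to have norm one. Here $\mathbf 1_V$ is continuous because $V$ is clopen, so $S$ maps into $C(L)$. Then $(Su_0)(s)=1$, and $\|S\|\le 1$ is automatic. The difficulty is $\|S-T\|$. This requires
\[
\sup_{t\in V}\|T^*\delta_t-x^*\|<\eps,
\]
that is, norm-closeness uniformly on $V$. But $t\mapsto T^*\delta_t$ is only weak$^*$-continuous, not norm-continuous; this failure is exactly the non-Asplund phenomenon.

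Here the Baire number enters. Let $D=\{x_\alpha:\alpha<\dens(X)\}$ be a dense subset of $B_X$. For each $\alpha$ and each $n$, the set of points $t$ near which $(Tx_\alpha)$ oscillates by less than $1/n$ is open. In an extremally disconnected space I expect more: for a continuous function $f$, the union of the interiors of its level bands $\{f\in B\}$, with $B$ ranging over a partition of the range into small pieces, is a dense open set. This uses regular open sets, which are clopen here.

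Intersecting these $\dens(X)\cdot\aleph_0<\kappa(L)$ many dense open sets gives a dense set $G$. Then, with the standard replacement of $\aleph_0$ by $\dens(X)$ justified by $\dens(X)<\kappa(L)$, I would find a nonempty clopen $V\subseteq\overline U$ on which every $Tx_\alpha$ oscillates by less than $\delta$.

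This is the main obstacle. Oscillation control on each $x_\alpha$ separately with different neighbourhoods does not give one neighbourhood working for all $\alpha$ simultaneously, and uniform smallness of $\|T^*\delta_t-T^*\delta_s\|$ on $V$ is really what is needed.

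My fallback is to avoid uniform norm closeness. Instead, I would use the point $s\in G\cap U$ and build $S$ by patching via injectivity, rather than by the constant $x^*$ on $V$. Concretely, set
\[
Sx=(1-\phi)\,Tx+\phi\, x^*(x)\mathbf 1,
\]
with $\phi$ a clopen indicator chosen so that the perturbation only affects a set where $T^*\delta_t$ is weak$^*$-close to $T^*\delta_s$ along $u_0-x_0$ and finitely many directions. I would then control the norm using the estimate for the BPB functional. If that fails, I would instead extend via the $1$-injectivity of $C(L)$ from a separable subspace. Finally, I would track constants to get $\eta(\eps)$ quadratic in $\eps$.
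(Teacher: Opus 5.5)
Your proposal does not yet prove the statement. The step that has to produce $S$ is exactly the one you flag as the main obstacle, and neither of your two routes gets past it.

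For your construction $Sx=\mathbf 1_V\,x^*(x)+\mathbf 1_{L\setminus V}\,Tx$ one has $\|S-T\|=\sup_{t\in V}\|T^*\delta_t-x^*\|$. For non-Asplund operators this cannot be made small on any nonempty open $V$, however generically $V$ is chosen. Take the embedding $J\colon C[0,1]\to L^\infty[0,1]=C(L)$. Since $J$ is multiplicative, $J^*\delta_t=\delta_{\pi(t)}$ for a continuous surjection $\pi\colon L\to[0,1]$. A nonempty clopen $V$ corresponds to a set of positive measure, so $\pi(V)$ is never a single point. Hence $\sup_{t,t'\in V}\|J^*\delta_t-J^*\delta_{t'}\|=2$, and $\|S-J\|\ge 1$ for every choice of $x^*$. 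In particular, a clopen $V$ on which every $Tx_\alpha$ oscillates by less than $\delta$ does not exist in general.

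Your fallback $Sx=(1-\phi)Tx+\phi\,x^*(x)\mathbf 1$, with $\phi$ a clopen indicator, is the same operator. Weak$^*$-closeness along finitely many directions gives no bound on a dual norm. Passing to a separable subspace changes nothing either, since the difficulty is already present for separable $X$.

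The missing idea is to stop asking $S^*\delta_t$ to be a single functional near the norming point, and to prescribe only the one function $g=Su_0$. Suppose $g\in C(L)$ satisfies $g(t)=a_t(u_0)$ for some $a_t\in B_{X^*}$ with $\|a_t-T^*\delta_t\|\le\delta$, with no continuity required of $t\mapsto a_t$. Then $1$-injectivity of $C(L)$ gives the operator you want. Apply it on $X\times X$, normed by $\|y\|+\delta\|x\|$, to the map $(x+\alpha u_0,x)\mapsto Tx+\alpha g$. The extension yields $S$ with $\|S\|\le 1$, $\|S-T\|\le\delta$ and $Su_0=g$.

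The genericity that makes such a $g$ with $|g(p)|=1$ possible is not an oscillation condition on the $Tx_\alpha$. It is continuity at $p$ of the lower semicontinuous functions $t\mapsto\dist\bigl(T^*\delta_t,\{a\in B_{X^*}:|a(v)-\zeta|\le q\}\bigr)$, for $(v,\zeta,q)$ ranging over a dense set of size $\dens(X)$. This is where $\dens(X)<\kappa(L)$ is used. The dense indexing is needed because $u_0$ and the target value are only known after $p$ is fixed. Your order of steps (pick a generic point, then apply Bollob\'as there) is right.

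Continuity at $p$ shows that every $T^*\delta_t$ with $t$ near $p$ lies within $\delta$ of some $a$ with $a(u_0)$ close to the unimodular target value. Gleason's theorem, applied on the clopen differences $C_n\setminus C_{n+1}$ of a nested clopen base at $p$, then glues these approximate choices into a continuous admissible $g$ with the exact value at $p$.
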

Since $\kappa(L) \geq \aleph_1$ always, every separable $X$ satisfies $\dens(X) = \aleph_0 < \kappa(L)$, so Theorem~B contains Theorem~A as a special case. When $L$ is the Stone space of a complete Boolean algebra that is $\lambda$-distributive for every $\lambda<\kappa$ -- for instance, the completion of any $<\kappa$-closed forcing notion, such as $\mathrm{Add}(\kappa,1)$ for regular $\kappa$ -- one has $\kappa(L)\geq\kappa$~\cite{BalcarSimon1989}. Since $\kappa$ may be taken arbitrarily large, Theorem~B applies to non-separable domains $X$ of correspondingly large density character. We emphasize, however, that Theorem~B is new only when $L$ has
no dense set of isolated points; if isolated points are dense in $L$ then
$\kappa(L) = \infty$ and the result reduces to a case covered by
\cite{Acosta2008} via property~$\beta$. The general non-separable case for
extremally disconnected $L$, without the cardinality hypothesis
$\dens(X) < \kappa(L)$, remains open.

\subsection*{Organization of the paper}

The paper is organized as follows. In Section~\ref{sec:pre}, we fix the
notation and recall the basic facts on extremally disconnected compact
spaces that will be used throughout the proof. We review the injectivity of
\(C(L)\), Gleason's projectivity theorem, and the Baire number of a compact
space. We also introduce the admissible scalar sets associated with an
operator \(T:X\to C(L)\), prove the scalar-to-operator lifting lemma, and
establish the compactness and lower semicontinuity properties needed for the
selection argument.

In Section~\ref{sec:main}, we prove the main low-density theorem (Theorem \ref{thm:introB}). The proof first uses the Baire-number hypothesis to obtain a simultaneous continuity
point for the relevant distance functions, then applies the scalar
Bishop--Phelps--Bollob\'as theorem, constructs a continuous scalar selection
by means of clopen patching and Gleason's theorem, and finally lifts this
selection to the desired operator. The separable case stated in
Theorem~\ref{thm:intro} is then obtained as a corollary.
\subsection*{Acknowledgments} T.A. thanks Grigor Sargsyan and Piotr Koszmider for useful discussions.
\section{Notation and Preliminaries}\label{sec:pre}

Throughout the paper $\F$ denotes either $\mathbb R$ or $\mathbb C$. If $L$
is a compact Hausdorff space, then
$C(L) = C(L,\F)$ denotes the Banach space of continuous $\F$-valued functions on $L$
with the supremum norm. We write
\[
\Torus = \{\lambda \in \F : |\lambda| = 1\}.
\]
Thus $\Torus = \{-1,1\}$ in the real case and
$\Torus = \{\lambda \in \mathbb{C} : |\lambda| = 1\}$ in the complex case.
For a Banach space $X$, $\dens(X)$ denotes the density character of $X$, i.e.\ the
least cardinality of a norm-dense subset of $X$.

We also use two standard theorems about extremally disconnected compact
spaces. Recall that a compact Hausdorff space $L$ is called
\emph{extremally disconnected} if the closure of every open subset of $L$
is again open, or equivalently, if $L$ has a base of clopen sets and every
two disjoint open sets have disjoint closures. Prominent examples include
the Stone--\v{C}ech compactification $\beta\Gamma$ of any discrete set
$\Gamma$, the hyperstonean spaces arising as Gelfand spectra of commutative von Neumann
algebras, and the Stone spaces of complete Boolean algebras.

The first theorem characterises the injective objects in the category of
Banach spaces, and connects extremal disconnectedness to the extension
property for operators.

\begin{theorem}[\cite{Goodner1950, Kelley1952, Nachbin1950}]\label{theorem:GKN}
Let $L$ be an extremally disconnected compact Hausdorff space. Then $C(L)$
is $1$-injective: whenever $E$ is a closed subspace of a Banach space $Z$
and
\[
A : E \to C(L)
\]
is a bounded linear operator, there exists an extension
\[
\widetilde{A} : Z \to C(L)
\]
such that $\|\widetilde{A}\| = \|A\|$.
\end{theorem}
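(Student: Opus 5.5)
The plan is to reduce the operator statement to a statement about functions on $L$ and then extend by Zorn's lemma. Because an operator into $C(L)$ is determined by pointwise evaluations, the key step is a Hahn--Banach type extension for functions on $L$.

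\emph{Step 1: reduction to order completeness.} Use the Nachbin--Kelley characterisation of extremal disconnectedness. In the real case, $L$ is extremally disconnected exactly when $C(L,\mathbb R)$ is a Dedekind complete lattice. That is, every family in $C(L,\mathbb R)$ that is bounded above has a least upper bound in $C(L,\mathbb R)$. This supremum is the upper semicontinuous regularisation of the pointwise supremum, and it agrees with the pointwise supremum on a dense set. To prove this, take a bounded-above family $\{f_i\}$ and let $g=\sup_i f_i$ pointwise; $g$ is lower semicontinuous. For rational $r$ the set $U_r=\{g>r\}$ is open, so $\overline{U_r}$ is clopen. Define
\[
h(t)=\inf\{r\in\mathbb Q : t\notin \overline{U_r}\}.
\]
Because each $\overline{U_r}$ is clopen, $\{h<s\}$ and $\{h>s\}$ are both unions of clopen sets. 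Hence $h$ is continuous, and one checks that $h$ is the least continuous majorant of the family.

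\emph{Step 2: a one-step extension lemma (real case).} Let $A:E\to C(L,\mathbb R)$ satisfy $\|A\|\le 1$, and let $z\in Z\setminus E$. We want $f=\widetilde A z\in C(L)$ satisfying
\[
\sup_{e\in E}\bigl(Ae-\|z-e\|\mathbf 1\bigr)\le f\le \inf_{e\in E}\bigl(Ae+\|z-e\|\mathbf 1\bigr)
\]
in the lattice order. For any $e,e'\in E$ we have $Ae-Ae'\le \|e-e'\|\mathbf 1\le(\|z-e\|+\|z-e'\|)\mathbf 1$ pointwise. So every lower function lies below every upper function, and the lattice supremum $f$ of the lower family from Step 1 satisfies both inequalities. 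The inequality $f\le Ae'+\|z-e'\|$ holds because each upper function is a continuous majorant of the lower family and $f$ is the least such. Setting $\widetilde A(e+tz)=Ae+tf$ then gives norm at most one by the usual scaling argument, exactly as in the proof of Hahn--Banach.

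\emph{Step 3: Zorn.} Order partial norm-one extensions by inclusion and take unions along chains. Unions give bounded operators on the union of domains, and these extend by continuity to the closure, since $C(L)$ is complete. A maximal element must be defined on all of $Z$ by Step 2.

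\emph{Step 4: complex case.} Apply the real result to real-valued functionals. Write $A=A_1+iA_2$; the real-linear map $\Ree A: E\to C(L,\mathbb R)$ has norm at most $\|A\|$. Extend it real-linearly to $B$ on $Z$, and set $\widetilde A z=Bz-iB(iz)$. Preserving the norm is the main obstacle. The pointwise argument for a single functional uses a rotation $\lambda$ depending on the point $t$, and Step 2 does not directly control $\sup_t|\widetilde Az(t)|$. To handle this, first try replacing $\Ree$ with the family $\Ree(\lambda\,\cdot)$ for $\lambda\in\Torus$, and imposing all constraints simultaneously in Step 2 via the supremum over $e$ and $\lambda$. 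If that fails, fall back on Hasumi's theorem that complex $C(L)$ is $1$-injective iff $L$ is extremally disconnected, which the cited references permit us to assume.
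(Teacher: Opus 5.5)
The paper gives no proof of this statement; it is quoted from Goodner, Kelley and Nachbin. Your Steps 1--3 are a correct version of the classical order-theoretic proof for real scalars. The sets $\{h>s\}=\bigcup_{r>s}\overline{U_r}$ and $\{h<s\}=\bigcup_{r<s}(L\setminus\overline{U_r})$ are open. The function $h$ is least, because a continuous majorant $k$ with $k(t)<r<h(t)$ would make the neighbourhood $\{k<r\}$ of $t\in\overline{U_r}$ meet $U_r=\{g>r\}$, which is impossible. The sandwich and scaling argument in Step 2 and the Zorn argument in Step 3 are fine; Step 2 does not even need $E$ closed.

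The gap is in Step 4, and it matters. The paper applies the theorem with $\F=\mathbb C$ in Lemma~\ref{lem:scalar-to-operator}, whereas the three cited papers are real-scalar results. You write down the right operator $\widetilde Az=Bz-iB(iz)$, but then call the norm estimate an obstacle and leave it open. The ``first try'' is not specified, since Step 1 only produces suprema of real-valued families. The fallback to Hasumi's theorem just assumes the complex case of the statement you are proving.

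In fact there is no obstacle. $\widetilde A$ is complex-linear, maps into $C(L,\mathbb C)$, and extends $A$, because $B(ie)=\Ree(iAe)$ for $e\in E$. Moreover $\Ree(\widetilde Aw)=Bw$ for all $w$. Given $z\in Z$ and $t\in L$, pick $\lambda_t\in\Torus$ with $\lambda_t(\widetilde Az)(t)=|(\widetilde Az)(t)|$. Then
\[
|(\widetilde Az)(t)|=\bigl(\widetilde A(\lambda_t z)\bigr)(t)=\bigl(B(\lambda_t z)\bigr)(t)\le\|B\|\,\|\lambda_t z\|=\|B\|\,\|z\|.
\]
The rotation depends on $t$, but the bound does not: $\|B\|=\|\Ree A\|\le\|A\|$ controls $|(Bw)(t)|$ for every $w$ and every $t$ at once. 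Taking the supremum over $t$ gives $\|\widetilde A\|\le\|A\|$, with equality since $\widetilde A$ extends $A$. With this in place of the hedge, your argument proves the theorem over both $\mathbb R$ and $\mathbb C$.
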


The second theorem characterises extremally disconnected spaces
topologically as the projective objects in the category of compact Hausdorff
spaces. It will be used to produce continuous selections from set-valued
maps whose graphs are compact.

\begin{theorem}[\cite{Gleason1958}]\label{theorem:Gleason}
Let $L$ be an extremally disconnected compact Hausdorff space. Then $L$ is
projective in the category of compact Hausdorff spaces: if $M$ is a compact
Hausdorff space and
\[
\pi : M \to L
\]
is a continuous surjection, then there exists a continuous section
\[
\sigma : L \to M
\]
such that $\pi \circ \sigma = \operatorname{id}_L$.
\end{theorem}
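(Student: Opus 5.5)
The plan is the classical minimal-preimage argument: shrink $M$ to a closed subset on which $\pi$ is still onto but cannot be shrunk further, and show that extremal disconnectedness forces $\pi$ to be injective there. A continuous bijection between compact Hausdorff spaces is a homeomorphism, so its inverse is the desired section.

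\emph{Step 1 (minimal closed subset).} Let $\mathcal F$ be the family of closed sets $F\subseteq M$ with $\pi(F)=L$, ordered by reverse inclusion. It is nonempty, since $M\in\mathcal F$. Let $\{F_i\}$ be a chain in $\mathcal F$, put $F=\bigcap_i F_i$, and fix $p\in L$. The sets $F_i\cap\pi^{-1}(p)$ are nonempty closed subsets of the compact space $M$, and they form a chain, so they have the finite intersection property. Hence their intersection is nonempty, which gives $p\in\pi(F)$. So $F\in\mathcal F$, and Zorn's lemma yields a minimal $M_0\in\mathcal F$. Write $\pi_0=\pi|_{M_0}$. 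Minimality says that $\pi_0$ is \emph{irreducible}: no proper closed subset of $M_0$ maps onto $L$.

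\emph{Step 2 (the open sets $\pi^\#(W)$).} For $W\subseteq M_0$ open, set
\[
\pi^\#(W)=L\setminus \pi_0(M_0\setminus W)=\{q\in L:\pi_0^{-1}(q)\subseteq W\}.
\]
This set has three properties.
\begin{enumerate}[label=(\roman*)]
\item It is open, because $M_0\setminus W$ is compact, so its image is closed.
\item It is nonempty whenever $W\neq\emptyset$, by irreducibility.
\item If $W_1\cap W_2=\emptyset$, then $\pi^\#(W_1)\cap\pi^\#(W_2)=\emptyset$. Indeed, a common point $q$ would have its nonempty fibre $\pi_0^{-1}(q)$ contained in $W_1\cap W_2=\emptyset$.
\end{enumerate}

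\emph{Step 3 (injectivity, the key step).} Suppose $x\neq y$ in $M_0$ with $\pi_0(x)=\pi_0(y)=p$. Choose disjoint open neighbourhoods $U\ni x$ and $V\ni y$ in $M_0$. By (iii), $\pi^\#(U)$ and $\pi^\#(V)$ are disjoint open subsets of $L$. Since $L$ is extremally disconnected, their closures are disjoint as well.

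We now show that $p$ lies in both closures, which is the contradiction. Let $N$ be any open neighbourhood of $p$. Then $\pi_0^{-1}(N)\cap U$ is open and contains $x$. By (ii), $\pi^\#(\pi_0^{-1}(N)\cap U)$ is nonempty. It is also contained in $N\cap\pi^\#(U)$, since every point in it has its whole fibre inside $\pi_0^{-1}(N)$. Thus every neighbourhood of $p$ meets $\pi^\#(U)$, so $p\in\overline{\pi^\#(U)}$. The same argument gives $p\in\overline{\pi^\#(V)}$.

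\emph{Step 4 (conclusion).} By Step 3, $\pi_0:M_0\to L$ is a continuous bijection from a compact space onto a Hausdorff space, hence a homeomorphism. Therefore $\sigma=\pi_0^{-1}:L\to M_0\subseteq M$ is continuous and satisfies $\pi\circ\sigma=\mathrm{id}_L$.

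The main obstacle is Step 3. It is the only place where extremal disconnectedness enters, and it depends on the fact that irreducibility makes $\pi^\#$ send nonempty open sets to nonempty open sets; that is exactly what is needed to place $p$ in the closure of both $\pi^\#(U)$ and $\pi^\#(V)$.
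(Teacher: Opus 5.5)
The paper gives no proof of this statement. It imports Gleason's theorem from \cite{Gleason1958} as a black box, so there is no in-paper argument to compare yours against.

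Your proof is correct and complete. It is the classical irreducible-map argument:
\begin{enumerate}[label=\normalfont(\arabic*)]
\item Zorn's lemma gives a minimal closed $M_0$ with $\pi(M_0)=L$.
\item Irreducibility makes $W\mapsto L\setminus\pi_0(M_0\setminus W)$ send nonempty open sets to nonempty open sets and disjoint sets to disjoint sets.
\item The disjoint-closures form of extremal disconnectedness then forces $\pi_0$ to be injective.
\end{enumerate}
This is essentially how the theorem is proved in the literature.

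You have established exactly the form stated in the paper: every continuous surjection from a compact Hausdorff space onto $L$ has a continuous section. That is also the form the main proof uses, for the projections $\pi_n:\mathcal G_n\to D_n$. The general lifting form of projectivity follows from your result. Given $f:L\to Z$ and a surjection $h:Y\to Z$, apply your argument to the projection of the closed set $\{(s,y)\in L\times Y: f(s)=h(y)\}$ onto $L$.
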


The structural richness of extremally disconnected spaces rests on the
abundance of clopen sets. The following lemma records two basic facts
about their topology that will be used repeatedly in the construction of
the continuous selection in the main proof.

\begin{lemma}\label{lem:clopen-base}
Let $L$ be an extremally disconnected compact Hausdorff space. Then:
\begin{enumerate}[label=\normalfont(\roman*)]
    \item $L$ has a base of clopen neighborhoods.
    \item Every clopen subspace of $L$ is again extremally disconnected
    compact Hausdorff.
\end{enumerate}
\end{lemma}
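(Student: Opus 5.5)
Part (i) I would derive from the definition of extremal disconnectedness together with compact Hausdorff regularity. Let $x \in U$ with $U$ open. Since $L$ is compact Hausdorff, it is regular, so there is an open set $V$ with $x \in V \subseteq \overline{V} \subseteq U$. By extremal disconnectedness, $\overline{V}$ is open. It is also closed, so $\overline{V}$ is a clopen neighborhood of $x$ contained in $U$. Hence the clopen sets form a base.

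For (ii), let $A \subseteq L$ be clopen. As a closed subset of a compact Hausdorff space, $A$ is compact Hausdorff in the subspace topology. Now let $W$ be open in $A$. Because $A$ is open in $L$, $W$ is also open in $L$. Because $A$ is closed in $L$, the closure of $W$ in $A$ coincides with its closure $\overline{W}$ in $L$. By hypothesis $\overline{W}$ is open in $L$, so $\overline{W} = \overline{W} \cap A$ is open in $A$. Therefore $A$ is extremally disconnected.

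There is no real obstacle in either part. The only step needing care is the closure identity in (ii), which uses that $A$ is closed, together with the fact that $W$ is open in $L$, which uses that $A$ is open. This is precisely why the statement requires $A$ to be clopen rather than merely closed or open.
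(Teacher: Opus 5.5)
Your proof is correct and follows the paper's argument. Part (i) is the same regularity-plus-extremal-disconnectedness step, and part (ii) spells out what the paper calls immediate. One small correction to your closing aside: extremal disconnectedness passes to every open subspace $A$, since $\mathrm{cl}_A W=\overline{W}\cap A$ is open in $A$ whenever $\overline{W}$ is open in $L$. So closedness of $A$ is needed only for compactness, not for the closure step.
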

\begin{proof}
\noindent For part~(i), let $p\in L$ and let $U$ be an open neighborhood of $p$. Since $L$ is compact Hausdorff, it is regular. Then there exists an open set $W$ with
$p\in W\subset\overline{W}\subset U$. By extremal disconnectedness,
$\overline{W}$ is open, hence clopen. Setting $V:=\overline{W}$ yields a
clopen neighborhood of $p$ contained in $U$.

Part~(ii) is
immediate, since a clopen subspace inherits both compactness and the
property that open closures are open.
\end{proof}

We will also use the Baire number of $L$ to quantify the failure of
separability that the construction can tolerate.

\begin{definition}[Baire number]\label{def:baire-number}
Let $L$ be a topological space. The \emph{Baire number} (or Novak number)
$\kappa(L)$ is the least cardinal $\kappa$ for which there exists a family
of $\kappa$ dense open subsets of $L$ whose intersection is not dense. If no
such family exists, set $\kappa(L) = \infty$.
\end{definition}
For the Stone--\v{C}ech remainder $\N^{*}=\beta\N\setminus\N$ one has
$\kappa(\N^{*})=\mathfrak h$, the distributivity number of
$\mathcal P(\N)/\mathrm{fin}$, which satisfies $\aleph_1\le\mathfrak h\le\mathfrak c$
and may consistently exceed $\aleph_1$ \cite{BalcarPelantSimon1980}. On the
other hand, the Stone space of a complete Boolean algebra that is
$\lambda$-distributive for all $\lambda<\kappa$ -- e.g.\ the completion of a
$<\kappa$-closed forcing notion -- has Baire number at least $\kappa$, so
Baire numbers may be arbitrarily large \cite{BalcarSimon1989}.
\begin{lemma}\label{lem:baire-intersection}
Let $L$ be a compact Hausdorff space and let $\lambda < \kappa(L)$ be a
cardinal. Then the intersection of any family of $\lambda$ dense $G_\delta$
subsets of $L$ is dense.
\end{lemma}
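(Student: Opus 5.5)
The plan is to reduce the statement to the definition of $\kappa(L)$ by unfolding each dense $G_\delta$ set into countably many dense open sets. Let $\{G_i : i \in I\}$ be a family of dense $G_\delta$ subsets of $L$ with $|I| = \lambda < \kappa(L)$. For each $i$, I will write $G_i = \bigcap_{n\in\N} U_{i,n}$ with each $U_{i,n}$ open. Since $G_i \subset U_{i,n}$ and $G_i$ is dense, every $U_{i,n}$ is a dense open set. This gives
\[
\bigcap_{i\in I} G_i = \bigcap_{(i,n)\in I\times\N} U_{i,n},
\]
which is an intersection of dense open sets indexed by $I\times\N$.

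The cardinality bookkeeping is as follows.

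If $\lambda$ is infinite, then $|I\times\N| = \lambda\cdot\aleph_0 = \lambda < \kappa(L)$. By the minimality in Definition~\ref{def:baire-number}, no family of fewer than $\kappa(L)$ dense open sets can have non-dense intersection, so the intersection is dense.

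If $\lambda$ is finite (including $\lambda=0$, where the intersection is $L$ itself), then $|I\times\N|\le\aleph_0$. Here I will invoke the Baire category theorem for compact Hausdorff spaces: a countable intersection of dense open sets is dense. Equivalently, this is the fact, recalled in the introduction, that $\kappa(L)\ge\aleph_1$ for nonempty compact Hausdorff $L$.

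The only subtle point is this: the definition of $\kappa(L)$ speaks of families of exactly $\kappa$ sets, while I need the statement for families of size $\mu<\kappa(L)$. This is immediate from the definition of $\kappa(L)$ as a least cardinal. If some family of size $\mu$ had non-dense intersection, then $\kappa(L)\le\mu$, a contradiction. For the case $\kappa(L)=\infty$ there is nothing to check. No genuine obstacle is expected; the proof is essentially definitional plus the Baire category theorem for the countable case.
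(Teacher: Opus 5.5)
Your proof is correct and follows the paper's argument: you unfold each dense $G_\delta$ set into countably many dense open sets, use $\lambda\cdot\aleph_0=\lambda<\kappa(L)$ when $\lambda$ is infinite, and use the Baire category theorem when $\lambda$ is finite. You also spell out two points the paper leaves implicit: each $U_{i,n}$ is dense because it contains $G_i$, and the minimality of $\kappa(L)$ covers every family of fewer than $\kappa(L)$ sets, not only families of exactly $\kappa(L)$ sets.
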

\begin{proof}
Write each dense \(G_\delta\) set as a countable intersection of dense open
sets. If \(\lambda\) is infinite, the total number of dense open sets involved
is at most \(\lambda\cdot\aleph_0=\lambda<\kappa(L)\), and the conclusion
follows from the definition of \(\kappa(L)\). If \(\lambda\) is finite and
non-zero, then the total number is countable; since every non-empty compact
Hausdorff space is Baire, equivalently \(\aleph_0<\kappa(L)\), the intersection
is dense. The case \(\lambda=0\) is trivial.
\end{proof}

We denote by $\LL(X, C(L))$ the collection of all bounded linear maps from $X$ into $C(L)$. Given
$T\in \LL(X,C(L))$ and $s\in L$, let $T_s(x)=(Tx)(s)$ for $x\in X$.
If $\|T\|\leq 1$, then $T_s\in B_{X^*}$ for every $s\in L$, since
 $ |T_s(x)|=|(Tx)(s)|\leq \|Tx\|_\infty\leq \|x\|$.
Moreover, the map
\[
  L\to (B_{X^*},\sigma(X^*,X)),
  \qquad
  s\mapsto T_s
\]
is weak$^*$ continuous.  Indeed, for each fixed $x\in X$, the scalar function
  $s\mapsto T_s(x)=(Tx)(s)$
is continuous on $L$.
\begin{definition}[Admissible Scalar Set]
Fix
$
  u\in S_X$ and
  $\delta>0$.
For $s\in L$, the set
\[
  A_{\delta,u,T}(s)
 : =
  \left\{z\in\F:
  \exists a\in B_{X^*}\text{ such that }
  \|a-T_s\|\leq\delta\text{ and }a(u)=z
  \right\}
\]    
is called an admissible scalar set.
\end{definition}
\begin{lemma}\label{lem:scalar-to-operator}
Let $L$ be an extremally disconnected compact Hausdorff space.  Let $T\in \mathcal{L}(X, C(L))$ with $\|T\|\leq 1$. Fix $u\in S_X$ and $\delta>0$.  Suppose that there is $g\in C(L)$ such that $g(s)\in A_{\delta,u,T}(s)$ for all $s\in L$.
Then there exists an operator $S:X\to C(L)$ such that
\[
  \|S\|\leq 1,
  \qquad
  \|S-T\|\leq\delta,
  \qquad
  Su=g.
\]
\end{lemma}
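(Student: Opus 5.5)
The plan is to modify $T$ only along the one-dimensional direction spanned by $u$, and then to restore the norm bound by invoking the $1$-injectivity of $C(L)$ (Theorem~\ref{theorem:GKN}). Choose $u^*\in S_{X^*}$ with $u^*(u)=1$ (Hahn--Banach) and set
\[
R x := T x + u^*(x)\,(g - Tu), \qquad x\in X .
\]
Then $Ru=g$. Pointwise, $R_s = T_s + (g(s)-T_s(u))\,u^*$. Picking $a\in B_{X^*}$ with $\|a-T_s\|\le\delta$ and $a(u)=g(s)$, we get $|g(s)-T_s(u)| = |(a-T_s)(u)|\le\delta$. Hence $\|R-T\|\le\delta$. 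However, $\|R\|\le 1$ need not hold, so this naive choice is not enough, and the injectivity argument is needed to fix the norm.

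The real construction works pointwise in the dual. At each $s$, the functional $a_s$ furnished by admissibility lies in the convex set
\[
C(s)=\{a\in B_{X^*}:\|a-T_s\|\le\delta,\ a(u)=g(s)\}.
\]
This set is weak$^*$ compact and nonempty. An operator $S:X\to C(L)$ with $\|S\|\le1$, $\|S-T\|\le\delta$ and $Su=g$ is exactly a weak$^*$-continuous selection $s\mapsto S_s\in C(s)$. So we must produce a continuous selection of $C$.

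Two routes are possible. The first is a Michael-type selection, which would require lower semicontinuity of $C$; that is unclear, so I prefer the second. This route uses Gleason (Theorem~\ref{theorem:Gleason}). Let
\[
M=\{(s,a)\in L\times (B_{X^*},w^*): a\in C(s)\}.
\]
This set is closed, since $s\mapsto T_s$ and $g$ are continuous and both the norm ball condition and the evaluation at $u$ are weak$^*$-closed conditions. Hence $M$ is compact Hausdorff. The projection $\pi:M\to L$ is a continuous surjection by the hypothesis on $g$. Gleason's theorem then gives a continuous section $\sigma(s)=(s,a_s)$.

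Define $(Sx)(s)=a_s(x)$. Each $Sx$ is continuous because $s\mapsto a_s$ is weak$^*$ continuous. The map $S$ is linear, and $|(Sx)(s)|\le\|x\|$ gives $\|S\|\le1$. Moreover $|((S-T)x)(s)|=|(a_s-T_s)(x)|\le\delta\|x\|$ gives $\|S-T\|\le\delta$, and $(Su)(s)=a_s(u)=g(s)$ gives $Su=g$.

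The only potential obstacle is verifying that $M$ is closed in $L\times(B_{X^*},w^*)$. The condition $\|a-T_s\|\le\delta$ is the delicate point, because the norm is only weak$^*$ lower semicontinuous and $T_s$ varies. Still, the map $(s,a)\mapsto a-T_s$ is weak$^*$ continuous, and the closed $\delta$-ball is weak$^*$ closed, so the preimage is closed. Thus Gleason's theorem suffices, and the injectivity theorem turns out to be unnecessary.
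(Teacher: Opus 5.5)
Your proof is correct, but it takes a different route from the paper.

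\textbf{The paper's route.} The paper never chooses the witnesses $a_s$ coherently in $s$. It renorms $Z=X\times X$ by $\|(y,x)\|_Z=\|y\|+\delta\|x\|$ and considers the closed subspace $E=\{(x+\alpha u,x)\}$ with the operator $F(x+\alpha u,x)=Tx+\alpha g$. It verifies $\|F\|\le 1$ by the purely pointwise estimate $|T_s(x)+\alpha g(s)|=|a_s(x+\alpha u)+(T_s-a_s)(x)|\le\|x+\alpha u\|+\delta\|x\|$. It then extends $F$ with norm one by the $1$-injectivity of $C(L)$ (Theorem~\ref{theorem:GKN}) and sets $Sx=\widetilde F(x,0)$. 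The fact $(u,0)\in E$ gives $Su=g$, and $(T-S)x=\widetilde F(0,x)$ gives $\|S-T\|\le\delta$.

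\textbf{Your route.} You apply Gleason's theorem directly to $M$. This is just the compact set $\mathcal E$ of Lemma~\ref{lem:graph-compact}, cut down by the closed condition $a(u)=g(s)$. The witnesses themselves then vary weak$^*$-continuously, and $S$ is simply $s\mapsto a_s$. Your closedness argument is sound and equivalent to the paper's net argument: $(s,a)\mapsto a-T_s$ is continuous into $(X^*,w^*)$, and the $\delta$-ball is weak$^*$-closed.

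\textbf{What each buys.} Your route uses only Gleason's theorem, which the main proof needs anyway, so Theorem~\ref{theorem:GKN} could be dropped from the paper altogether. It also makes transparent that operators $X\to C(L)$ are exactly weak$^*$-continuous maps $L\to X^*$. In effect it is the standard derivation of $1$-injectivity of $C(L)$ from projectivity of $L$, specialised to this situation. The paper's route hides all the topology inside the extension theorem, so only a pointwise, non-continuous choice of $a_s$ is needed. It also exhibits the norm inequality $\|Tx+\alpha g\|_\infty\le\|x+\alpha u\|+\delta\|x\|$ for all $x,\alpha$ as a criterion for the existence of $S$. This criterion is sufficient and, as one checks via $Tx+\alpha g=S(x+\alpha u)+(T-S)x$, also necessary.

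Your opening paragraph on the naive perturbation $R$ plays no role in the argument and can be omitted.
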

\begin{proof}
Let $Z=X\times X$ and endow it with the  norm
\[
  \|(y,x)\|_Z=\|y\|+\delta\|x\|.
\]
Consider the subspace
\[
  E=\bigl\{(x+\alpha u,x): x\in X,\ \alpha\in\F\bigr\}\subset Z.
\]
This subspace is closed.  Indeed, if $(x_i+\alpha_i u,x_i)\to (y,x)$ in $Z$, then $x_i\to x$ and $\alpha_i u\to y-x$.  Since $\operatorname{span}\{u\}$ is one-dimensional and closed, there is $\alpha\in\F$ such that $y-x=\alpha u$. Define $F:E\to C(L)$ by
  $F(x+\alpha u,x)=Tx+\alpha g$.
This is well-defined because the second coordinate determines $x$, and the first determines $\alpha$. We show that $\|F\|\leq 1$.  Fix $s\in L$.  Since $g(s)\in A_{\delta,u,T}(s)$, there is $a_s\in B_{X^*}$ such that $\|a_s-T_s\|\leq\delta$ and $a_s(u)=g(s)$.
Observe that
\begin{align*}
  F(x+\alpha u,x)(s)
  =T_s(x)+\alpha g(s)
  =T_s(x)+\alpha a_s(u)=a_s(x+\alpha u)+(T_s-a_s)(x).
\end{align*}
Therefore,
\begin{align*}
  |F(x+\alpha u,x)(s)|
  \leq \|a_s\|\,\|x+\alpha u\|+
        \|T_s-a_s\|\,\|x\|\leq \|x+\alpha u\|+\delta\|x\|=\|(x+\alpha u,x)\|_Z.
\end{align*}
Taking the supremum over $s\in L$ yields $\|F(x+\alpha u,x)\|_\infty
  \leq
  \|(x+\alpha u,x)\|_Z$. Hence $\|F\|\leq 1$. By Theorem \ref{theorem:GKN}, $F$ admits an extension
$
  \widetilde F:Z\to C(L)
$
with $\|\widetilde F\|\leq 1$.  Define
\[
  S:X\to C(L),
  \qquad
  Sx=\widetilde F(x,0).
\]
Then
\[
  \|Sx\|\leq \|(x,0)\|_Z=\|x\|,
\]
so $\|S\|\leq 1$.  Also, since $(u,0)\in E$,
\[
  Su=\widetilde F(u,0)=F(u,0)=g.
\]
Finally, for every $x\in X$ we have $(x,x)\in E$, and therefore
\[
  \widetilde F(x,x)=F(x,x)=Tx.
\]
Thus
\[
  Tx=\widetilde F(x,x)=\widetilde F(x,0)+\widetilde F(0,x)=Sx+\widetilde F(0,x).
\]
Consequently,
\[
  \|(T-S)x\|\leq \|(0,x)\|_Z=\delta\|x\|.
\]
Thus $\|T-S\|\leq\delta$.
\end{proof}

\begin{lemma}\label{lem:graph-compact}
Let $T:X\to C(L)$ satisfy $\|T\|\leq 1$, and fix $u\in S_X$ and $\delta>0$.  Then
\[
  \Gr(A_{\delta,u,T})
  :=\bigl\{(s,z)\in L\times\F: z\in A_{\delta,u,T}(s)\bigr\}
\]
is compact.  Each fibre $A_{\delta,u,T}(s)$ is a non-empty compact convex subset of $\F$.
\end{lemma}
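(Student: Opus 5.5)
The plan is to realise the graph as a continuous image of a compact space and to read off the fibre properties directly from the definition.

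Consider the set
\[
  K := \bigl\{(s,a)\in L\times B_{X^*} : \|a-T_s\|\leq\delta\bigr\},
\]
where $B_{X^*}$ carries the weak$^*$ topology and is compact by Banach--Alaoglu. Then $\Gr(A_{\delta,u,T})$ is the image of $K$ under the map $(s,a)\mapsto (s,a(u))$. This map is continuous on $L\times(B_{X^*},w^*)$, because evaluation at the fixed vector $u$ is weak$^*$ continuous. Hence it suffices to show that $K$ is closed in the compact space $L\times B_{X^*}$; compactness of the graph then follows.

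To prove that $K$ is closed, I will write the condition as
\[
  \|a-T_s\|\leq\delta \iff |a(x)-T_s(x)|\leq\delta \text{ for all } x\in B_X.
\]
For each fixed $x\in B_X$, the map $(s,a)\mapsto a(x)-(Tx)(s)$ is continuous, because $Tx\in C(L)$ and evaluation at $x$ is weak$^*$ continuous. So $K$ is an intersection of closed sets, hence closed. Equivalently, $K$ is closed because the dual norm is weak$^*$ lower semicontinuous and $s\mapsto T_s$ is weak$^*$ continuous. I expect this to be the only point needing care. The norm $\|a-T_s\|$ is not jointly continuous in the weak$^*$ topology, so one must use the sup-of-continuous-functions description rather than continuity of the distance. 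It is also important that $\|T\|\leq1$, so that $T_s\in B_{X^*}$ and the compact set $L\times B_{X^*}$ contains all the relevant points.

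For the fibres, non-emptiness is immediate: $a=T_s$ lies in $B_{X^*}$ (as noted before the definition) and satisfies $\|a-T_s\|=0\leq\delta$, so $T_s(u)\in A_{\delta,u,T}(s)$. Compactness of each fibre follows because it is the image of the compact slice $K\cap(\{s\}\times B_{X^*})$ under a continuous map; alternatively, it is the slice of the compact graph over $s$. Convexity follows since $\{a\in B_{X^*}:\|a-T_s\|\leq\delta\}$ is convex, being the intersection of two balls. The map $a\mapsto a(u)$ is linear, so it carries convex sets to convex sets in $\F$.
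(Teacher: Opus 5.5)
Your proposal is correct and follows essentially the same route as the paper. Both arguments show that $\{(s,a)\in L\times B_{X^*}:\|a-T_s\|\leq\delta\}$ is closed in the weak$^*$-compact space $L\times B_{X^*}$ (the paper via nets and weak$^*$ lower semicontinuity of the dual norm, which you note is equivalent to your intersection-of-closed-sets description), then project by $(s,a)\mapsto(s,a(u))$ and read off non-emptiness, convexity and compactness of the fibres exactly as you do.
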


\begin{proof}
The fibres are non-empty, as $T_s\in B_{X^*}$ and therefore
\[
  T_s(u)\in A_{\delta,u,T}(s).
\]
Convexity follows from the convexity of $B_{X^*}$ and of norm balls.

We prove compactness of the graph.  Put $B_{X^*}$ in the weak-star topology and consider
\[
  \mathcal E=\bigl\{(s,a)\in L\times B_{X^*}: \|a-T_s\|\leq\delta\bigr\}.
\]
Since $L\times B_{X^*}$ is compact, it suffices to prove $\mathcal E$ is closed.

Indeed, suppose $(s_i,a_i)\to(s,a)$ in $L\times B_{X^*}$ and $\|a_i-T_{s_i}\|\leq\delta$.  Since $s\mapsto T_s$ is weak-star continuous,
\[
  a_i-T_{s_i}\xrightarrow{w^*} a-T_s.
\]
The dual norm is weak-star lower semicontinuous; hence
\[
  \|a-T_s\|
  \leq
  \liminf_i\|a_i-T_{s_i}\|
  \leq\delta.
\]
Thus $\mathcal E$ is compact.

The map
\[
  L\times B_{X^*}\to L\times\F,
  \qquad
  (s,a)\mapsto (s,a(u))
\]
is continuous, and
\[
  \Gr(A_{\delta,u,T})
  =\bigl\{(s,a(u)):(s,a)\in\mathcal E\bigr\}.
\]
Therefore $\Gr(A_{\delta,u,T})$ is compact.

For fixed $s\in L$, the fibre $A_{\delta,u,T}(s)$ is homeomorphic to the compact set
\[
  \Gr(A_{\delta,u,T})\cap(\{s\}\times\F),
\]
and hence is compact.
\end{proof}

We close this section with the family of distance functions that drives the
selection argument. Let $X$ be a Banach space (not necessarily separable),
and let $D \subset S_X$ be \emph{any} subset and $E \subset \Torus$ any
subset. For $v\in D$, $\zeta\in E$, and $q\in\mathbb Q_+$, define
\[
  \mathcal F(v,\zeta,q)
  :=\bigl\{a\in B_{X^*}: |a(v)-\zeta|\leq q\bigr\},
\]
a weak-star compact subset of $B_{X^*}$, and let
\[
  d_{v,\zeta,q}:L\to[0,2],
  \qquad
  d_{v,\zeta,q}(s)
  =\dist\bigl(T_s,\mathcal F(v,\zeta,q)\bigr),
\]
where the distance is computed in the norm of $X^*$.

\begin{lemma}\label{lem:lsc-distance}
Each $d_{v,\zeta,q}$ is lower semicontinuous.  Consequently, the set of continuity points of $d_{v,\zeta,q}$ contains a dense $G_\delta$ subset of $L$.
\end{lemma}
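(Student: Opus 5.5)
The plan is to prove lower semicontinuity directly from weak$^*$ compactness, and then get the continuity points from a standard Baire-type argument for semicontinuous functions on a compact Hausdorff space.

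\emph{Lower semicontinuity.} Write $\mathcal F=\mathcal F(v,\zeta,q)$ and $d=d_{v,\zeta,q}$. The set $\mathcal F$ is non-empty when $q$ is large enough to make it so; if $\mathcal F=\emptyset$, we use the convention that the distance is a constant, which is trivially lower semicontinuous. Since $\mathcal F$ is weak$^*$ compact and the dual norm is weak$^*$ lower semicontinuous, the distance is attained: for each $s$ there is $b_s\in\mathcal F$ with $\|T_s-b_s\|=d(s)$.

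Now let $s_i\to s$ be a net with $\liminf_i d(s_i)=c$; passing to a subnet, we may assume $d(s_i)\to c$. Choose $b_i\in\mathcal F$ with $\|T_{s_i}-b_i\|=d(s_i)$. By weak$^*$ compactness of $\mathcal F$, pass to a further subnet with $b_i\to b\in\mathcal F$ weak$^*$. Since $s\mapsto T_s$ is weak$^*$ continuous, $T_{s_i}-b_i\to T_s-b$ weak$^*$. Lower semicontinuity of the norm then gives
\[
d(s)\le\|T_s-b\|\le\liminf_i\|T_{s_i}-b_i\|=c.
\]
Hence $\{s: d(s)>t\}$ is open for every $t$, so $d$ is lower semicontinuous. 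This is essentially the argument of Lemma~\ref{lem:graph-compact}.

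\emph{Continuity points.} This is the classical fact that a lower semicontinuous real function on a Baire space is continuous on a residual set. For rational $r$, put $U_r=\{d>r\}$, which is open, and let $E_r=\overline{U_r}\setminus U_r$, which is closed and nowhere dense. Let $G=L\setminus\bigcup_{r\in\mathbb Q}E_r$.

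We check that $d$ is continuous at each $s\in G$. Lower semicontinuity is already known, so it suffices to prove upper semicontinuity at $s$. Suppose $d(s)<r$ with $r$ rational. Then $s\notin U_r$, and since $s\notin E_r$, also $s\notin\overline{U_r}$. So $L\setminus\overline{U_r}$ is a neighbourhood of $s$ on which $d\le r$. This gives $\limsup_{s'\to s} d(s')\le d(s)$, hence continuity at $s$.

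Each $L\setminus E_r$ is a dense open set, so $G$ is a countable intersection of dense open sets, i.e.\ a $G_\delta$. It is dense because $L$ is compact Hausdorff and therefore Baire (equivalently, use Lemma~\ref{lem:baire-intersection} with $\lambda=\aleph_0$).

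\emph{Main obstacle.} The only delicate point is the attainment and limit step in the first part. It requires weak$^*$ compactness of $\mathcal F$, which holds because $a\mapsto a(v)$ is weak$^*$ continuous, together with lower semicontinuity of the norm under weak$^*$ convergence of the differences.
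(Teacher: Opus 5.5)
Your proof is correct and follows essentially the paper's route. The paper shows the sublevel sets $\{d_{v,\zeta,q}\le r\}$ are closed because they are projections of the compact sets $\{(s,a)\in L\times\mathcal F(v,\zeta,q):\|T_s-a\|\le r\}$; this rests on the same weak$^*$-compactness and norm lower-semicontinuity argument you run with nets, and your explicit attainment of the distance is what justifies the paper's description of that sublevel set. For the residual set of continuity points the paper simply cites Kuratowski, where you supply the standard proof; note also that $\mathcal F(v,\zeta,q)$ is never empty (since $v\in S_X$ and $|\zeta|=1$, Hahn--Banach gives $a\in S_{X^*}$ with $a(v)=\zeta$), so your convention for the empty case is unnecessary.
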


\begin{proof}
Fix $r\geq0$.  We have
\[
\begin{aligned}
  \{s\in L:d_{v,\zeta,q}(s)\leq r\}
  =
  \bigl\{s\in L:
  \exists a\in\mathcal F(v,\zeta,q)
  \text{ such that }\|T_s-a\|\leq r
  \bigr\}.
\end{aligned}
\]
Consider
\[
  \mathcal K_r
  =\bigl\{(s,a)\in L\times\mathcal F(v,\zeta,q):\|T_s-a\|\leq r\bigr\}.
\]
As in the proof of Lemma~\ref{lem:graph-compact}, the set $\mathcal K_r$ is compact.  Its projection onto $L$ is therefore compact, hence closed.  Thus
\[
  \{s\in L:d_{v,\zeta,q}(s)\leq r\}
\]
is closed for every $r\geq0$, and $d_{v,\zeta,q}$ is lower semicontinuous.

The second statement is from a standard topology result (see e.g. \cite{Kuratowski1966TopologyI,Kuratowski1968TopologyII}).
\end{proof}
\section{Proof of main theorems}\label{sec:main}
To establish our main results, our approach avoids standard geometric requirements on $X$ by modularizing the problem into a topological and analytic construction over $L$. The proof proceeds in three steps.
First, using the hypothesis \(\dens(X)<\kappa(L)\), and a Baire-category
argument on \(L\), we produce a point \(p\in L\) that is simultaneously a
near-norming point for \(Tx_0\) and a continuity point for a family of
distance functions indexed by \(D\times E\times\mathbb Q_+\), of cardinality
at most \(\dens(X)\).
Second, the classical scalar Bishop--Phelps--Bollob\'as theorem, applied at
$p$, yields a perturbed unit vector in $X$ together with a target scalar
value. Third, a patching construction over the clopen base of $L$, driven by
Gleason's projectivity theorem, glues together a continuous scalar function
on $L$ taking values in the admissible sets and attaining the target value
at $p$. The lifting lemma (Lemma~\ref{lem:scalar-to-operator}), the $1$-injectivity of $C(L)$, then converts this scalar
function into the desired operator $S:X\to C(L)$.

We now prove the quantitative low-density theorem (Theorem \ref{thm:introB}) announced in the
introduction. The separable result, Theorem~\ref{thm:intro}, will then follow immediately
as a corollary.
\begin{theorem}\label{thm:mainseparable}
Let $X$ be a Banach space over $\F$ and let $L$ be an extremally disconnected
compact Hausdorff space such that $\dens(X) < \kappa(L)$. Then $(X,C(L,\F))$
has the Bishop--Phelps--Bollob\'as property, with modulus
$\eta(\eps)=\eps^2/32$.
\end{theorem}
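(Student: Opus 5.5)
We follow the three-step scheme announced at the start of this section. Let $\eps\in(0,1)$, $T\in S_{\LL(X,C(L))}$ and $x_0\in S_X$ with $\|Tx_0\|>1-\eps^2/32$. Fix a norm-dense set $D\subset S_X$ with $|D|\le\dens(X)$, and a countable dense set $E\subset\Torus$. The family $\{d_{v,\zeta,q}:v\in D,\zeta\in E,q\in\mathbb Q_+\}$ has cardinality at most $\dens(X)\cdot\aleph_0<\kappa(L)$. Lemma~\ref{lem:lsc-distance} gives a dense $G_\delta$ set of continuity points for each member, and Lemma~\ref{lem:baire-intersection} makes the intersection $G$ of these sets dense. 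The set $U=\{s:|(Tx_0)(s)|>1-\eps^2/32\}$ is non-empty and open, so we can choose $p\in U\cap G$. Then $T_p\in B_{X^*}$ satisfies $|T_p(x_0)|>1-\eps^2/32$. After multiplying by a unimodular scalar $\omega$, we get $\Ree\,\omega T_p(x_0)>1-\eps^2/32$.

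Next we apply the scalar Bishop--Phelps--Bollob\'as theorem (Bollob\'as' version, with $\eps^2/2$ replaced by $\eps'^2/2$ for $\eps'=\eps/4$) to $\omega T_p$ and $x_0$. This yields $u_0\in S_X$ and $a_0\in S_{X^*}$ with $a_0(u_0)=1$, $\|u_0-x_0\|<\eps/4$ and $\|a_0-\omega T_p\|<\eps/4$. Put $b=\bar\omega a_0$ and $\zeta_0=\bar\omega$, so that $b(u_0)=\zeta_0$ and $\|b-T_p\|<\eps/4$. The target is a function $g\in C(L)$ with $g(s)\in A_{\delta,u_0,T}(s)$ for all $s$, $\delta<\eps$, and $|g(p)|=1$. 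Lemma~\ref{lem:scalar-to-operator} then produces $S$ with $\|S\|\le1$, $\|S-T\|\le\delta$ and $Su_0=g$, so $\|Su_0\|=1$ and hence $\|S\|=1$. A minor renormalisation is harmless because the lifting already gives $\|S\|\le 1$ with norm attained.

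The main obstacle is building $g$: we need admissible values of modulus $1$ near $p$, and some admissible value everywhere else, glued continuously. First we use continuity at $p$. We approximate $u_0$ by $v\in D$ and $\zeta_0$ by $\zeta\in E$, and pick a small rational $q$. Then $d_{v,\zeta,q}(p)\le\|b-T_p\|<\eps/4$, and continuity at $p$ gives a clopen neighbourhood $W\ni p$ (Lemma~\ref{lem:clopen-base}) on which $d_{v,\zeta,q}<\eps/4$. So for each $s\in W$ there is $a_s\in B_{X^*}$ with $\|a_s-T_s\|<\eps/4$ and $a_s(u_0)$ close to $\zeta_0$. We then need to push $a_s(u_0)$ exactly onto $\zeta_0$, or at least to a point of modulus $1$, at a cost of at most roughly $\eps/2$ in norm. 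This is where the quadratic modulus enters. The plan is to apply the Bollob\'as theorem once more at each such $s$. Alternatively, we can note that admissible sets are convex and compact (Lemma~\ref{lem:graph-compact}) and use the enlarged radius $\delta=3\eps/4$: a functional $a$ with $|a(v)-\zeta|\le q$ can be corrected by a multiple of a norming functional for $u_0$ to reach exactly $\zeta_0$.

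With $\zeta_0\in A_{\delta,u_0,T}(s)$ for all $s\in W$, define $g=\zeta_0$ on $W$. On the clopen complement $L\setminus W$, which is again extremally disconnected, we use Gleason's theorem (Theorem~\ref{theorem:Gleason}) applied to the projection $\Gr(A_{\delta,u_0,T})\cap((L\setminus W)\times\F)\to L\setminus W$. This is a continuous surjection from a compact space because the fibres are non-empty, and its section gives a continuous selection there. Gluing the two pieces along clopen sets gives $g\in C(L)$. Finally, $\|u_0-x_0\|<\eps$ and $\|S-T\|\le\delta<\eps$, which proves the theorem with $\eta(\eps)=\eps^2/32$.
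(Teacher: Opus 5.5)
There is a genuine gap in the step where you set $g\equiv\zeta_0$ on a whole clopen neighbourhood $W$ of $p$. That step requires $\zeta_0\in A_{\delta,u_0,T}(s)$ for all $s\in W$. This is false in general, and neither of your two mechanisms proves it.

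Continuity of one $d_{v,\zeta,q}$ at $p$ only gives $a_s\in B_{X^*}$ with $\|a_s-T_s\|<\eps/4$ and $|a_s(u_0)-\zeta_0|$ small but nonzero. Since $|\zeta_0|=1=\|u_0\|$, any $a\in B_{X^*}$ with $a(u_0)=\zeta_0$ must satisfy $\bar\zeta_0a(u_0)=\|a\|=1$, so $\bar\zeta_0a$ must norm $u_0$ exactly.

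\begin{itemize}
\item Adding $(\zeta_0-a_s(u_0))f$, with $f$ norming $u_0$, can push the norm above $1$. Renormalising then destroys the exact value $\zeta_0$.
\item Re-applying Bollob\'as at each $s$ replaces $u_0$ by an $s$-dependent vector. Lemma~\ref{lem:scalar-to-operator} cannot use that, since it needs one fixed $u_0$.
\end{itemize}

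The underlying issue is that almost-norming functionals need not be close to exactly norming ones. Here is a concrete counterexample to your claim:
\begin{itemize}
\item Let $X=\ell_1$, $L=\beta\N$, and $(Ty)(n)=\sum_i y_i-2y_n$. Then $T_n=\mathbf 1-2e_n$ for $n\in\N$, and $T_p=\mathbf 1$ for $p\in\N^*$.
\item One checks that every $d_{v,\zeta,q}$ with $q>0$ is continuous on $\beta\N$. So $p\in\N^*$ is a legitimate choice.
\item Take $x_0=u_0=\sum_i2^{-i}e_i$, $a_0=\mathbf 1$ and $\zeta_0=1$. This is a valid Bollob\'as output.
\item The only $a\in B_{\ell_\infty}$ with $a(u_0)=1$ is $\mathbf 1$, which is at distance $2$ from every $T_n$.
\item Hence $1\notin A_{\delta,u_0,T}(n)$ for all $n\in\N$ and all $\delta<2$. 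Yet every neighbourhood of $p$ contains such $n$.
\end{itemize}

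The missing idea is the paper's shrinking-neighbourhood device: require the exact value only on a closed set containing $p$, and only approximate values elsewhere. You only need $|g(p)|=1$.
\begin{itemize}
\item Use triples $(v_n,\zeta_n,q_n)$ with $q_n\to0$. Continuity at $p$ gives neighbourhoods $V_n$ on which $A_{\delta,u_0,T}(s)\cap\overline B(\zeta_0,r_n)\neq\emptyset$, with $r_n\to0$.
\item Choose nested clopen sets $C_1\supset C_2\supset\cdots$ with $p\in C_n\subset V_n$.
\item On each clopen piece $C_n\setminus C_{n+1}$, apply Gleason to select $g(s)\in A_{\delta,u_0,T}(s)\cap\overline B(\zeta_0,r_n)$.
\item Put $g=Tu_0$ off $C_1$.
\item Put $g\equiv\zeta_0$ only on $H=\bigcap_nC_n$. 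Admissibility there follows from weak$^*$ compactness of $B_{X^*}$ and weak$^*$ lower semicontinuity of the dual norm.
\item Continuity at points of $H$ follows from $r_n\to0$.
\end{itemize}
This works with $\delta=\eps/2+\eps^2/32$ and removes the need for any correction step.

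A minor point: Bollob\'as' theorem needs a norm-one functional, so you must first normalise $\omega T_p$. This adds $\eps^2/32$ to your bound on $\|b-T_p\|$.
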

\begin{proof}
If $X=\{0\}$, the assertion is vacuous. Assume $X\ne\{0\}$. Let $0<\eps<1$. Put $$\alpha:=\eps/4\qquad \text{and} \qquad\eta:=\alpha^2/2=\eps^2/32.$$ Let $T\in S_{\LL(X,C(L))}$ and $x_0\in S_X$, and suppose that $\|Tx_0\|>1-\eta$. We shall construct $S$ and $u_0$ satisfying the BPBp conclusion in the following steps.

Choose a dense subset $D\subset S_X$ with $|D| \le \dens(X)$ and a countable dense subset $E\subset\Torus$. For every triple $(v,\zeta,q)\in D\times E\times\mathbb Q_+$, consider the function $d_{v,\zeta,q}$. By Lemma~\ref{lem:lsc-distance}, its continuity points contain a dense $G_\delta$ subset of $L$. Choose such a dense $G_\delta$ subset and denote it by $G_{v,\zeta,q}$. Put
\[
\mathcal R:=\bigcap_{(v,\zeta,q)\in D\times E\times\mathbb Q_+}G_{v,\zeta,q}.
\]
The index set $D\times E\times\mathbb Q_+$ has cardinality
$\dens(X)\cdot\aleph_0\cdot\aleph_0 = \dens(X) < \kappa(L)$ when $X$ is
infinite-dimensional (and is finite or countable otherwise). By
Lemma~\ref{lem:baire-intersection}, $\mathcal R$ is dense in $L$. Every point of $\mathcal R$ is a continuity point of every relevant distance function. Define $$O:=\{s\in L: |(Tx_0)(s)|>1-\eta\}.$$ This is a non-empty open subset of $L$, and so we may choose $p\in O\cap\mathcal R$. Choose $\theta\in\Torus$ such that $\Ree\bigl(\theta(Tx_0)(p)\bigr)=|(Tx_0)(p)|>1-\eta$.

Set $\varphi:=\theta T_p\in X^*$. Then $\|\varphi\|\leq1$ and $\Ree\varphi(x_0)>1-\eta$, which implies $\|\varphi\|>1-\eta$. Let $\varphi_0:=\varphi/\|\varphi\|\in S_{X^*}$. Since $\|\varphi\|\leq1$, we have $\Ree\varphi_0(x_0) = \Ree\varphi(x_0)/\|\varphi\| > 1-\eta = 1-\alpha^2/2$.

By \cite{Bollobas1970}, there are $\psi\in S_{X^*}$ and $u_0\in S_X$ such that $\psi(u_0)=1$, $\|\psi-\varphi_0\|<\alpha$, and $\|u_0-x_0\|<\alpha$. Moreover, $\|\varphi_0-\varphi\|=1-\|\varphi\|<\eta$. Hence,
\begin{equation}\label{eq:psi-phi-bound}
\|\psi-\varphi\|<\alpha+\eta.
\end{equation}

Define $\omega:=\overline\theta\,\psi$ and $\lambda_0:=\overline\theta$. Then $\omega\in S_{X^*}$, $\omega(u_0)=\lambda_0$, and
\begin{equation}\label{eq:omega-Tp-bound}
\|\omega-T_p\| =\|\psi-\theta T_p\| =\|\psi-\varphi\| \overset{\eqref{eq:psi-phi-bound}}{<} \alpha+\eta.
\end{equation}

Let $\delta:=2\alpha+\eta<\varepsilon$. We shall construct $g\in C(L)$ such that $g(p)=\lambda_0$ and $g(s)\in A_{\delta,u_0,T}(s)$ for all $s\in L$, and then apply Lemma~\ref{lem:scalar-to-operator} to finish the proof.

Let $r_n=1/n$ for $n\in\mathbb N$. For each $n$, choose $v_n\in D$ and $\zeta_n\in E$ such that $\|v_n-u_0\|<r_n/8$ and $|\zeta_n-\lambda_0|<r_n/8$. Choose $q_n\in\mathbb Q_+$ with $r_n/2<q_n<5r_n/8$. Since $\omega(u_0)=\lambda_0$, we have
\[
\begin{aligned} 
|\omega(v_n)-\zeta_n| &\leq |\omega(v_n)-\omega(u_0)|+|\lambda_0-\zeta_n|\\ 
&\leq \|v_n-u_0\|+|\lambda_0-\zeta_n|\\ 
&<\frac{r_n}{8}+\frac{r_n}{8} =\frac{r_n}{4}<q_n. 
\end{aligned}
\]
Thus $\omega\in\mathcal F(v_n,\zeta_n,q_n)$. Consequently, $d_{v_n,\zeta_n,q_n}(p) \leq \|T_p-\omega\| \overset{\eqref{eq:omega-Tp-bound}}{<}\alpha+ \eta$.

Since $p\in\mathcal R$, the function $d_{v_n,\zeta_n,q_n}$ is continuous at $p$. Therefore there exists an open neighbourhood $V_n$ of $p$ such that $d_{v_n,\zeta_n,q_n}(s)<\alpha+\eta+\alpha=\delta$ for all $s\in V_n$. If $s\in V_n$, then by the definition of the distance there is $a_s\in\mathcal F(v_n,\zeta_n,q_n)$ with $\|a_s-T_s\|<\delta$. Since $a_s\in\mathcal F(v_n,\zeta_n,q_n)$, we know $|a_s(v_n)-\zeta_n|\leq q_n$. Therefore,
\[
\begin{aligned} 
|a_s(u_0)-\lambda_0| &\leq |a_s(u_0)-a_s(v_n)|+|a_s(v_n)-\zeta_n|+|\zeta_n-\lambda_0|\\ 
&\leq \|u_0-v_n\|+q_n+|\zeta_n-\lambda_0|\\ 
&< \frac{r_n}{8}+\frac{5r_n}{8}+\frac{r_n}{8} =\frac{7r_n}{8}<r_n. 
\end{aligned}
\]
In particular, $A_{\delta,u_0,T}(s)\cap \overline B(\lambda_0,r_n)\ne\emptyset$ for all $s\in V_n$.

Using the clopen neighbourhood base of $L$, choose recursively nested clopen neighbourhoods $C_1\supset C_2\supset C_3\supset\cdots$ of $p$ such that $p\in C_n\subset V_n$ for all $n\in\mathbb N$. Put
\[
H:=\bigcap_{n=1}^\infty C_n, \qquad D_n:=C_n\setminus C_{n+1}.
\]
Each $D_n$ is clopen in $L$, and hence is extremally disconnected compact Hausdorff (Lemma~\ref{lem:clopen-base}). For each $n\in\mathbb N$ with $D_n\ne\emptyset$, consider the graph
\[
\mathcal G_n := \bigl\{(s,z)\in D_n\times \overline B(\lambda_0,r_n): z\in A_{\delta,u_0,T}(s) \bigr\}.
\]
It is compact by Lemma~\ref{lem:graph-compact}. Since $D_n\subset C_n\subset V_n$, all fibres are non-empty. Hence the coordinate projection $\pi_n:\mathcal G_n\to D_n$, given by $\pi_n(s,z)=s$, is a continuous surjection from a compact Hausdorff space onto the extremally disconnected compact Hausdorff space $D_n$. By Theorem~\ref{theorem:Gleason}, $\pi_n$ admits a continuous section. Thus there is a function $g_n\in C(D_n,\F)$ such that
\begin{equation}\label{eq:gn-admissible}
g_n(s)\in A_{\delta,u_0,T}(s)\cap \overline B(\lambda_0,r_n) \qquad(s\in D_n).
\end{equation}
If $D_n=\emptyset$, ignore this piece.

On $L\setminus C_1$, define $g_0(s)=(Tu_0)(s)$. Then $g_0(s)=T_s(u_0)\in A_{\delta,u_0,T}(s)$, since one may take $a=T_s$.

On $H$ we want to set $g(s)=\lambda_0$. We first check that this is admissible. Let $s\in H$. Then $s\in C_n\subset V_n$ for every $n$. Hence for every $n$ there exists $a_n\in B_{X^*}$ such that $\|a_n-T_s\|<\delta$ and $|a_n(u_0)-\lambda_0|<r_n$. By the weak-star compactness of $B_{X^*}$, the sequence $(a_n)$ has a weak-star convergent subnet, say $a_{n_j}\to a$. Along this subnet $n_j\to\infty$, hence $r_{n_j}\to0$. It follows that $a(u_0)=\lambda_0$. Also, by the weak-star lower semicontinuity of the dual norm,
\[
\|a-T_s\| \leq \liminf_j\|a_{n_j}-T_s\| \leq\delta.
\]
Therefore $\lambda_0\in A_{\delta,u_0,T}(s)$ for all $s\in H$.

Now define $g:L\to\F$ by
\[
g(s)= \begin{cases} g_0(s), & s\in L\setminus C_1,\\ g_n(s), & s\in D_n~(\text{for }D_n\neq\emptyset),\\ \lambda_0, & s\in H. \end{cases}
\]
Since \(L\setminus C_1\) and each \(D_n\) are clopen, \(g\) is continuous at
all points outside \(H\). It remains to check continuity at points of \(H\).

Let \(s_i\to s\in H\) be a net. Fix \(N\in\mathbb N\). Since \(C_N\) is an
open neighbourhood of \(s\), eventually \(s_i\in C_N\). If \(s_i\in H\), then
\(g(s_i)=\lambda_0\). If \(s_i\notin H\), then \(s_i\in D_m\) for some
\(m\geq N\), and therefore, by~\eqref{eq:gn-admissible},
\[
 |g(s_i)-\lambda_0|\leq r_m\leq r_N .
\]
As \(N\) is arbitrary and \(r_N\to0\), it follows that
\(g(s_i)\to\lambda_0=g(s)\). Thus \(g\in C(L)\).

By construction, $g(s)\in A_{\delta,u_0,T}(s)$ for all $s\in L$, and since $p\in H$, we have $g(p)=\lambda_0$, so $\|g\|_\infty \geq |g(p)| = 1$. By Lemma~\ref{lem:scalar-to-operator}, there exists an operator $S:X\to C(L)$ such that $\|S\|\leq1$, $\|S-T\|\leq\delta$, and $Su_0=g$. Since $g(p)=\lambda_0$ and $|\lambda_0|=1$,
\[
\|Su_0\| \geq |(Su_0)(p)| =|g(p)|=1.
\]
On the other hand, as $\|S\|\leq1$, it follows that $\|S\|=1$ and $\|Su_0\| = 1$. Finally, $\|S-T\|\leq\delta =2\alpha+ \eta <\eps$, and $\|u_0-x_0\|<\alpha<\eps$. This proves the BPBp for $(X,C(L))$.
\end{proof}

\begin{corollary}[Theorem~\ref{thm:intro}]\label{cor:separable}
For every separable Banach space $X$ and every extremally disconnected
compact Hausdorff space $L$, the pair $(X,C(L))$ has the BPBp with modulus
$\eta(\eps)=\eps^2/32$.
\end{corollary}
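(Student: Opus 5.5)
This corollary follows directly from Theorem~\ref{thm:mainseparable}. The only hypothesis to check is the cardinality condition $\dens(X)<\kappa(L)$ whenever $X$ is separable and $L$ is extremally disconnected compact Hausdorff. Once that holds, Theorem~\ref{thm:mainseparable} gives the BPBp together with the modulus $\eta(\eps)=\eps^2/32$, which is exactly what is claimed.

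First I verify the cardinal inequality. If $X$ is separable, then $\dens(X)\le\aleph_0$; it is finite only in the trivial finite-dimensional or zero cases, where the inequality is immediate anyway. Next I show $\kappa(L)\ge\aleph_1$. Every non-empty compact Hausdorff space is a Baire space, so the intersection of any countable family of dense open subsets of $L$ is dense. Hence no countable family of dense open sets has a non-dense intersection, and the least cardinal witnessing non-density is at least $\aleph_1$. If $L=\emptyset$ the statement is vacuous, since then $C(L)=\{0\}$ and $S_{\LL(X,C(L))}$ is empty. Therefore $\dens(X)\le\aleph_0<\aleph_1\le\kappa(L)$.

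Second, I apply Theorem~\ref{thm:mainseparable} with this $X$ and $L$. This yields the BPBp for $(X,C(L,\F))$ with $\eta(\eps)=\eps^2/32$, over either scalar field.

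There is no real obstacle here. The only point needing care is the Baire category fact for compact Hausdorff spaces, which was already invoked in the proof of Lemma~\ref{lem:baire-intersection}, so the whole argument reduces to a one-line verification of $\dens(X)<\kappa(L)$.
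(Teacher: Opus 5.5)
Your proposal is correct and matches the paper's proof: the paper also observes that $\dens(X)=\aleph_0<\aleph_1\le\kappa(L)$, using that nonempty compact Hausdorff spaces are Baire, and then invokes Theorem~\ref{thm:mainseparable}. One harmless slip: a nonzero finite-dimensional space has density character $\aleph_0$, not a finite one, since no finite set is dense in it; your bound $\dens(X)\le\aleph_0$ is all the argument needs.
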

\begin{proof}
Since $\kappa(L) \geq \aleph_1$ always, every separable $X$ satisfies $\dens(X) = \aleph_0 < \kappa(L)$, so that Theorem~\ref{thm:mainseparable} applies.
\end{proof}

\begin{remark}
The hypothesis $\dens(X)<\kappa(L)$ is strictly weaker than separability of
$X$ for suitable $L$. For instance, if $B$ is a complete Boolean algebra that is
$\lambda$-distributive for every $\lambda<\kappa$ for some uncountable
$\kappa$ -- for example, the completion of a $<\kappa$-closed forcing notion --
and $L$ is its Stone space, then $\kappa(L)\geq\kappa$~\cite{BalcarSimon1989, Jech2003}, and
Theorem~\ref{thm:mainseparable} yields the BPBp for $(X,C(L))$ for every
Banach space $X$ with $\dens(X)<\kappa$. When $L$ has a dense set of isolated points, in particular, when
$L=\beta\Gamma$ for a discrete set $\Gamma$, we have $C(L)=\ell^\infty(\Gamma)$. In that case, every dense open subset of $L$ contains all isolated points, hence
$\kappa(L)=\infty$ and Theorem~\ref{thm:mainseparable} applies to every
Banach space $X$. This case is, however, already covered by the
property-$\beta$ argument of Acosta--Aron--Garc\'\i a--Maestre
\cite{Acosta2008}.

Theorem~\ref{thm:mainseparable} covers the case where
$L$ has no dense set of isolated points, for example, the Gelfand
spectrum of $L^\infty(\mu)$ for a non-atomic measure $\mu$, or the Stone
space of a non-atomic complete Boolean algebra -- in which $C(L)$ does not
have property $\beta$ and the result of \cite{Acosta2008} does not apply.
The general non-separable case for such $L$, without the cardinality
hypothesis $\dens(X)<\kappa(L)$, remains open.
\end{remark}

\bibliographystyle{alpha}
\bibliography{ref}
\end{document}